\newtheorem{theorem}{Theorem}[section]
\newtheorem{lemma}[theorem]{Lemma}
\theoremstyle{definition}
\newtheorem{defn}[theorem]{Definition}
\theoremstyle{remark}
\newtheorem{rem}[theorem]{Remark}
\tikzset{%
  >=latex, 
  inner sep=0pt,%
  outer sep=2pt,%
  mark coordinate/.style={inner sep=0pt,outer sep=0pt,minimum size=3pt,
    fill=black,circle}%
}
\def \la{\lambda}
\renewcommand{\le}{\leqslant}
\begin{document}

\title[On conic Fourier multipliers]
{Weighted estimates for conic\\ Fourier multipliers}

\keywords{Fourier multipliers; maximal operators; lacunary directions}
\address{Departamento de Matem\'aticas -- Universidad Aut\'onoma de Madrid, 28049 Madrid, Spain}
\email{antonio.cordoba@uam.es} 
\address{Instituto de Ciencias Matem\'aticas CSIC-UAM-UC3M-UCM, 28049 Madrid, Spain}
\email{keith.rogers@icmat.es}
\thanks{Supported by the Spanish grants MTM2010-16518, MTM2011-22851, SEV-2011-0087 and the ERC grant 277778.} 
\thanks{2010 Mathematics Subject Classification. Primary 42B25;
Secondary 26B05.}
\author{Antonio C\'ordoba}
\author{Keith M. Rogers}

\maketitle

\begin{abstract} We prove a weighted inequality which  controls conic Fourier multiplier operators in terms of lacunary 
directional maximal operators. By bounding the maximal operators, this enables us to conclude that
the multiplier operators are bounded on $L^p(\mathbb{R}^3)$  with $1<p<\infty$.
\end{abstract}

\section{Introduction}

For directions $\omega\in \Omega$ contained in the unit circle
$\mathbb{S}^1\subset \mathbb{R}^2$ we form the associated
polygon~$\mathcal{P}_\Omega$, with sides contained in
$\omega+\omega^\perp$ (see Figure~\ref{fone}). We consider the associated cones
$$
\Gamma_{\!\Omega}=\Big\{ \xi\in \mathbb{R}^3\, :\, \frac{(\xi_1,\xi_2)}{|\xi_3|} \in \mathcal{P}_\Omega \Big\}
$$
and the Fourier multiplier operators $T_{\Omega}$ defined, initially on Schwartz functions, by
$$
T_{\Omega}\,:f\,\mapsto\Big(\chi_{\Gamma_{\!\Omega}} \widehat{f}\,\Big)^\vee.
$$
Here,  $\,\widehat{\,}\,$ and $^\vee$ denote the Fourier transform
and inverse transform, respectively. The purpose of this note is to
provide a condition on the directions~$\Omega$ which ensures
$T_{\Omega}$ is bounded from $L^p(\mathbb{R}^3)$ to $L^p(\mathbb{R}^3)$, where
$1<p<\infty$. The question is only interesting when $p\neq
2$, as when $p=2$ the operator is bounded by Plancherel's theorem, and no
restrictions on the directions are necessary. As shown by Fefferman \cite{F}, one cannot take all the directions of the circle, converting the polygon in a disc, when $p\neq 2$,  however one can consider the cone associated to the disc if the multiplier is smoothed out appropriately (see \cite{LV} and the references therein).

 We will control $T_{\Omega}$, via a weighted inequality, by a combination of directional
maximal operators (see the forthcoming Theorem~\ref{thetheorem2}).
This point of view has its origins in a conjecture of the first
author~\cite{C1} for the Bochner--Riesz multiplier (see also
\cite{steinwilliams,C2,Ca,CaS,Ch,LRS} for progress on this conjecture and \cite{CRF1,Ca2,B1,B2,B0} 
for closely related problems). When the directions
are lacunary of finite order (loosely speaking, a lacunary set of
order $K$ is a lacunary sequence which has lacunary sets of order
$K-1$ accumulating at each direction; see Definition~\ref{dfour}) we are able to bound the relevant maximal
operators, which yields the following theorem.

\begin{theorem}\label{thetheorem} Let $1<p<\infty$ and suppose that $\Omega\subset\mathbb{S}^1$ is lacunary of finite order. Then $T_{\Omega}$ is bounded from $L^p(\mathbb{R}^3)$ to $L^p(\mathbb{R}^3)$.
\end{theorem}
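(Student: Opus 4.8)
The plan is to combine the weighted inequality of Theorem~\ref{thetheorem2}, which dominates $T_\Omega$ by a composition $\mathcal{M}_\Omega$ of lacunary directional maximal operators adapted to $\Omega$, with uniform $L^q$ bounds for $\mathcal{M}_\Omega$. The passage from the weighted $L^2$ estimate to unweighted $L^p$ bounds is the Córdoba--Fefferman duality trick, and the range $1<p<2$ then comes for free.

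Concretely, I would use Theorem~\ref{thetheorem2} in the form
\[
\int_{\mathbb{R}^3}|T_\Omega f|^2\,w\;\lc\;\int_{\mathbb{R}^3}|f|^2\,\mathcal{M}_\Omega w,\qquad w\ge 0.
\]
Fix $p>2$ and put $r=(p/2)'\in(1,\infty)$. Writing $\|T_\Omega f\|_p^2=\big\||T_\Omega f|^2\big\|_{p/2}$ and using $L^{p/2}$--$L^r$ duality to choose $w\ge 0$ with $\|w\|_r\le 1$ realizing (almost) this norm, the displayed inequality and Hölder give
\[
\|T_\Omega f\|_p^2\;\lc\;\|f\|_p^2\,\|\mathcal{M}_\Omega w\|_r\;\lc\;\|\mathcal{M}_\Omega\|_{L^r\to L^r}\,\|f\|_p^2.
\]
Thus $T_\Omega$ is bounded on $L^p$ for every $p>2$ for which $\mathcal{M}_\Omega$ is bounded on $L^{(p/2)'}$; since $(p/2)'$ sweeps out all of $(1,\infty)$, it suffices to bound $\mathcal{M}_\Omega$ on $L^q$ for every $1<q<\infty$. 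The endpoint $p=2$ is Plancherel, and since $\chi_{\Gamma_\Omega}$ is real-valued $T_\Omega$ is self-adjoint, so boundedness on $L^p$ for $p>2$ yields boundedness on $L^{p'}$ by duality, covering $1<p<2$.

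It remains to prove that, for $\Omega$ lacunary of order $K$, the operator $\mathcal{M}_\Omega$ is bounded on $L^q$, $1<q<\infty$, with a bound depending only on $q$ and $K$ and not on the particular set. I would argue by induction on $K$, in a vector-valued ($\ell^q$-valued) formulation so that the induction closes. The case $K=1$ is the classical $L^q$-boundedness of the maximal operator averaging over rectangles whose long axes lie in a single lacunary sequence of directions, together with its vector-valued extension. For the inductive step I would use Definition~\ref{dfour}: a set of order $K$ consists of a lacunary sequence $\{\theta_j\}$, its order-$1$ skeleton, together with, accumulating at each $\theta_j$, an affine image of a lacunary set of order $K-1$ supported in an angular window $I_j$, the windows $I_j$ being essentially disjoint. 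Splitting the supremum defining $\mathcal{M}_\Omega$ over these windows dominates it by the composition of the order-$1$ maximal operator in the directions $\{\theta_j\}$ with the supremum over $j$ of the order-$(K-1)$ operators attached to the $I_j$; an almost-orthogonality argument exploiting the disjointness of the windows reduces this last supremum, on $L^q$, to the $\ell^q$-valued bound for a single order-$(K-1)$ operator — the inductive hypothesis — and the $K=1$ bound, again vector-valued, then finishes. This is exactly where lacunarity of \emph{finite} order is essential: for a full set of directions the associated maximal operator is unbounded, consistently with Fefferman's obstruction \cite{F}.

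I expect the genuine difficulty to lie in Theorem~\ref{thetheorem2} rather than in the argument above: obtaining the weighted control of the conic multiplier requires decomposing $\chi_{\Gamma_\Omega}$ into pieces adapted to the faces of $\mathcal{P}_\Omega$, using the parabolic rescaling along the light rays to turn each piece into a smooth half-space multiplier that is bounded on $L^2(\mathcal{M}_\Omega w)$ uniformly in the face, and then reassembling the pieces through a weighted Littlewood--Paley / almost-orthogonality argument that loses only the maximal operator in the weight. Taking Theorem~\ref{thetheorem2} as given, the remaining delicate point is keeping the inductive maximal-function bound compatible with the vector-valued language, so that the order-$K$ induction does not degrade the exponent~$q$.
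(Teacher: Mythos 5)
Your reduction of Theorem~\ref{thetheorem} to the weighted inequality is exactly the paper's: the C\'ordoba--Fefferman duality trick with $w\in L^{(p/2)'}$, H\"older, and the boundedness of the controlling maximal operators on $L^{(p/2)'}$ give the range $2<p<\infty$, and self-adjointness of $T_\Omega$ (real multiplier) plus Plancherel handle $1<p\le 2$. That part is correct and coincides with Section~\ref{four} of the paper.

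The genuine gap is in what you take the controlling operator $\mathcal{M}_\Omega$ to be, and hence in which maximal bounds you actually need. You treat $\mathcal{M}_\Omega$ as a composition of \emph{planar} lacunary directional maximal operators and propose to bound it by induction on the lacunary order --- essentially re-proving the Sj\"ogren--Sj\"olin theorem, which the paper simply cites \cite{SS} (your ``almost-orthogonality'' step is precisely the hard content of \cite{SS,A,B}, so as written it is a citation issue rather than new work). But the weight in Theorem~\ref{thetheorem2} is $M^{15K}_{\Omega\cup\Omega^\perp}M^{6}_{\Omega^{\rm 3d}}M^{15K}_{\Omega\cup\Omega^\perp}$, and the middle factor is a maximal operator along the genuinely three--dimensional directions $\tfrac{1}{\sqrt2}(\omega\pm e_3)$, which lie on a circle sitting on the cone rather than in a coordinate plane; these arise because the paper writes $S_\omega T_\Omega f$ in terms of half-space multipliers $H_{\frac{1}{\sqrt2}(\omega\pm e_3)}$, $H_{\frac{1}{\sqrt2}(\omega^{\rm prev}\pm e_3)}$. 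The $L^q(\mathbb{R}^3)$ boundedness of $M_{\Omega^{\rm 3d}}$ does not follow from the two-dimensional finite-order theory nor from your planar induction: the lacunary structure of $\Omega^{\rm 3d}$ is not visible in a fixed basis, and the paper needs a separate argument (Lemma~\ref{pwq}), based on the localisation principle of Parcet--Rogers \cite{PR} applied iteratively with a change of basis at each step adapted to an accumulation point of highest order. Without an argument for this three-dimensional maximal operator (or for whichever 3d maximal operator your own version of the weighted inequality would produce), the passage from Theorem~\ref{thetheorem2} to Theorem~\ref{thetheorem} is incomplete; your closing speculation about the proof of Theorem~\ref{thetheorem2} does not supply it either.
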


The proof of this can be found in the final section. In the
following section, we will recall a weighted inequality for singular
integrals, and prove a weighted, angular
Littlewood--Paley inequality. That angular square functions are
bounded in $L^p$ is well-known (see \cite{CRF1,NSW} for lacunary
directions and \cite{SS} for lacunary directions of finite order),
however we will require a weighted version in order to control
$T_{\Omega}$ in terms of maximal operators.

\begin{figure}\label{fone}
\begin{tikzpicture}
\newdimen\R
\R=1.5cm
           \draw[xshift=-2.0\R,fill=gray!80,opacity=0.2] 
           (0:\R)  -- (11:1.005\R) -- (22:1.01\R) -- (45:1.05\R) -- (68:1.01\R) -- (79:1.005\R) -- (90:\R) -- (101:1.005\R) -- (112:1.01\R) -- (135:1.05\R)  -- (158:1.01\R) -- (169:1.005\R) -- (180:\R) -- (191:1.005\R) -- (202:1.01\R) -- (225:1.05\R) -- (248:1.01\R) -- (259:1.005\R) -- (270:\R) -- (281:1.005\R) -- (292:1.01\R) -- (315:1.05\R) -- (338:1.01\R) -- (349:1.005\R) -- (360:\R)
              node[above] {} ;
              \draw[xshift=-2.0\R, thin,black!90, opacity=0.2, dashed] circle (\R) ;
\end{tikzpicture}
\tdplotsetmaincoords{80}{0}
\begin{tikzpicture}[tdplot_main_coords]
\def\RI{40}
\def\RII{1.8}
\draw (0,0)
 \foreach \x in {0,180,185,191,202,225,248,259,265,270,275,281,292,315,338,349,355,360} { --  (0:0) node at (0:\RI-40)  (R1-\x) {} };
\draw[dashed] (R1-0.center)
  \foreach \x in {5,11,22,45,68,79,85,90,95,101,112,135,158,169,175} { --  (0:0) node at (0:\RI-40) (R1-\x) {} };
\begin{scope}[yshift=2.5cm]
\draw[fill=gray!30,opacity=0.5] (\RII,0)
 \foreach \x in {0,5,11,22,45,68,79,85,90,95,101,112,135,158,169,175,180,185,191,202,225,248,259,265,270,275,281,292,315,338,349,355,360} { --  (\x:\RII) node at (\x:\RII) (R2-\x) {}};
\foreach \x in {0,180,225,248,259,265,270,275,281,292,315,360} { \draw (R1-\x.center)--(R2-\x.center); };
\end{scope}
\end{tikzpicture}
\caption{The polygon $\mathcal{P}_\Omega$  and part of the cone $\Gamma_{\!\Omega}$ associated to a set $\Omega$ of lacunary directions.}
\end{figure}
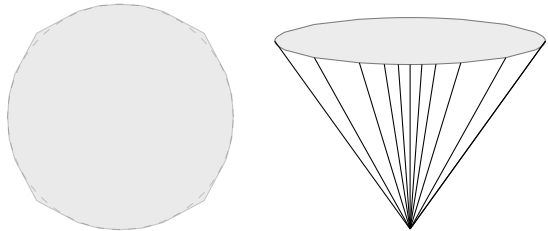

\section{An angular Littlewood--Paley inequality}\label{ang}

Theorem~\ref{thetheorem} will be proved by a  number of
applications of the following one--dimensional result for singular
integrals. There are multidimensional versions of some of the results in
this section, however we focus only on what we will need.
Fundamental to our approach is the Hardy--Littlewood maximal
operator~$M$ defined by
$$
Mf(x) =\sup_{r>0} \frac{1}{2r}\int_{-r}^r |f(x-t)|\,dt.
$$
This, and thus the composition $M^k\equiv M\circ\ldots\circ M$, is well-known to be bounded from $L^p(\mathbb{R})$ to $L^p(\mathbb{R})$ with $p>1$ (see for example the first theorem in \cite{Stein}). The following lemma was proven with $M^3 w$ replaced by $(M |w|^q)^{1/q}$, with $q>1$, in \cite{CoiF, CCF} which was then precised in  \cite{W, P}. This is sharp in the sense that the inequality does not necessarily hold if $M^3 w$ is replaced by $M^2 w$.

\begin{lemma}\label{si} Let $\mathcal{T}$ denote the singular integral operator defined by
$$\mathcal{T}f(x)=p.v.\int \mathcal{K}(x-y) f(y)\, dy,$$
where $\mathcal{K}\in C^1(\mathbb{R}\backslash\{0\})$ is such that $\mathcal{K}(x)\le c_0|x|^{-1}$ and $|\frac{d}{dx}\mathcal{K}(x)|\le c_0|x|^{-2}$. Then
$$
\int |\mathcal{T} f|^2 w\ \le \ C\!\int |f|^2 M^3w,
$$
where the constant $C$ depends only on $c_0$.
\end{lemma}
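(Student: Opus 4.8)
The plan is to recognize Lemma~\ref{si} as the Córdoba--Fefferman weighted norm inequality in the sharp form later isolated by Wilson and Pérez, and to derive it from two standard ingredients: a \emph{pointwise} estimate controlling the Fefferman--Stein sharp maximal function of $\mathcal{T}f$ by $Mf$, and weighted maximal inequalities that trade a sharp maximal function (resp. a maximal function) for the bare function at the cost of one more Hardy--Littlewood maximal operator applied to the weight. Throughout I read $\mathcal{T}$ as a genuine singular integral operator, so in particular bounded on $L^{2}(\mathbb{R})$ — this is automatic for the kernels that actually occur when the lemma is applied — and it suffices to prove the inequality for Schwartz $f$.

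The first, and main, step is the local estimate: for each fixed $0<\delta<1$ there is $C_{\delta}=C_{\delta}(c_{0})$ with
$$
M^{\#}\!\big(|\mathcal{T}f|^{\delta}\big)(x)\ \le\ C_{\delta}\,\big(Mf(x)\big)^{\delta}\qquad(x\in\mathbb{R}).
$$
Fixing an interval $I\ni x$ and splitting $f=f\chi_{2I}+f\chi_{(2I)^{c}}$, one bounds the mean oscillation of $|\mathcal{T}f|^{\delta}$ over $I$ around the constant $|\mathcal{T}(f\chi_{(2I)^{c}})(c_{I})|^{\delta}$, $c_{I}$ the centre of $I$, using the subadditivity of $t\mapsto t^{\delta}$. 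The local piece $\mathcal{T}(f\chi_{2I})$ is handled by Kolmogorov's inequality together with the weak $(1,1)$ bound for $\mathcal{T}$ — a consequence of Calderón--Zygmund theory, given the size bound $\mathcal{K}(x)\le c_{0}|x|^{-1}$ and the $L^{2}$ boundedness — which controls its $L^{\delta}$ average over $I$ by $\tfrac{1}{|2I|}\int_{2I}|f|\lesssim Mf(x)$. For the far piece one subtracts $\mathcal{T}(f\chi_{(2I)^{c}})(c_{I})$ and uses the mean value theorem with $|\tfrac{d}{dx}\mathcal{K}(x)|\le c_{0}|x|^{-2}$, summing over the dyadic annuli $2^{k+1}I\setminus 2^{k}I$, to get $|\mathcal{T}(f\chi_{(2I)^{c}})(y)-\mathcal{T}(f\chi_{(2I)^{c}})(c_{I})|\lesssim\sum_{k\ge1}2^{-k}Mf(x)\lesssim Mf(x)$ for $y\in I$. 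Both kernel hypotheses enter here, and with constants depending only on $c_{0}$; this is the technical heart, but it is entirely classical.

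Granting this, the weighted inequality follows by bookkeeping with weighted maximal inequalities. Since $|\mathcal{T}f|\le M(|\mathcal{T}f|^{\delta})^{1/\delta}$ almost everywhere, the weighted Fefferman--Stein sharp maximal inequality applied with exponent $2/\delta>1$ to the function $|\mathcal{T}f|^{\delta}$ gives
$$
\int|\mathcal{T}f|^{2}w\ \le\ \int\big(M(|\mathcal{T}f|^{\delta})\big)^{2/\delta}w\ \le\ C\!\int\big(M^{\#}(|\mathcal{T}f|^{\delta})\big)^{2/\delta}Mw\ \le\ C\!\int(Mf)^{2}\,Mw,
$$
the last inequality by the pointwise estimate of the previous paragraph. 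Iterating the Fefferman--Stein inequality $\int(Mg)^{p}v\le C\int|g|^{p}Mv$ (and, where needed, its sharp--function variant) then replaces $(Mf)^{2}Mw$ by $|f|^{2}$ times a higher iterate of $M$ applied to $w$; carrying out the count — and using $w\le Mw$ crudely if fewer iterates would do — produces the asserted bound with $M^{3}w$ on the right, three being the sharp number of iterates, as discussed in \cite{W,P}. One routine point to dispatch is the a priori finiteness required to invoke the weighted Fefferman--Stein inequality: one first runs the argument with $w$ replaced by $w\chi_{B_{N}}$, for which the left-hand side above is finite because $\mathcal{T}f$ is bounded, and then lets $N\to\infty$ by monotone convergence, using $M^{3}(w\chi_{B_{N}})\le M^{3}w$.

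I expect the only real obstacle to be the pointwise sharp-function estimate of the second paragraph — specifically, extracting from the bare kernel bounds simultaneously the weak $(1,1)$ control of the local part and the oscillation control of the far part, with constants depending only on $c_{0}$. Everything downstream of that is manipulation of standard weighted maximal inequalities, and the sharpness of the exponent $3$ (the failure with $M^{2}w$) is not needed for what follows; only the stated upper bound is used in the sequel.
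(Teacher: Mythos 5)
You should first note that the paper itself does not prove Lemma~\ref{si}: it is quoted from the literature (proved with $(M|w|^q)^{1/q}$ in place of $M^3w$ in \cite{CoiF,CCF}, and sharpened to $M^3w$ in \cite{W,P}), so your argument has to be measured against those proofs rather than against anything in this paper. Your first ingredient, the pointwise bound $M^{\#}\big(|\mathcal{T}f|^{\delta}\big)\le C_\delta (Mf)^{\delta}$ for $0<\delta<1$, is indeed classical and unproblematic (granting, as you say, that $\mathcal{T}$ is an honest Calder\'on--Zygmund operator bounded on $L^2$; the stated kernel bounds alone do not give this, but it holds for the multiplier operators to which the lemma is applied, and the cited references assume it). The genuine gap is your second ingredient: the ``weighted Fefferman--Stein sharp maximal inequality'' $\int (Mg)^{p}\,w\le C\int (M^{\#}g)^{p}\,Mw$ for \emph{arbitrary} weights $w$ is not a standard fact, and in the form you use it, it is false.

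Here is why it cannot be true: if it held, your own chain would prove $\int|\mathcal{T}f|^{2}w\le C\int|f|^{2}M^{2}w$, since a single application of the genuine Fefferman--Stein inequality converts $\int (Mf)^{2}\,Mw$ into $C\int|f|^{2}M^{2}w$; but the paper (and P\'erez \cite{P}) states precisely that $M^{2}w$ does not suffice. Since the pointwise sharp-function bound and the classical Fefferman--Stein inequality are true, the faulty step must be the two-weight sharp-function inequality with a single $M$ on the weight. The structural reason is that the good-$\lambda$ (equivalently, sharp-function) argument converts Lebesgue-measure smallness of an exceptional set inside each Whitney/Calder\'on--Zygmund cube into $w$-measure smallness; this is legitimate for $A_\infty$ weights but fails for general $w$ (take $w$ concentrated on the exceptional set). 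Making that step work for arbitrary weights is exactly the content of Wilson's and P\'erez's theorems, and it costs roughly two extra maximal functions on the weight (e.g.\ an Orlicz maximal function $M_{L(\log L)^{1+\epsilon}}w\lesssim M^{3}w$, using $M^{k+1}w\approx M_{L(\log L)^{k}}w$), not one. So the ``bookkeeping'' paragraph is where the real difficulty of the lemma lives, and as written your argument proves a statement known to be false; the remark ``using $w\le Mw$ crudely if fewer iterates would do'' is the symptom of this. To repair it you would need to follow the actual two-weight arguments of \cite{P} or \cite{W} (or simply cite them, as the paper does); the downstream manipulations in your last step are fine once the correct two-weight sharp-function substitute is in hand.
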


Ordering $\mathbb{S}^1$ in a clockwise sense, for each direction
$\omega\in\Omega$ we write $\omega^{\rm prev}$ for the previous
direction of $\Omega$ and write $\omega^{\rm prev}<\omega$.
Considering the regions~$A_\omega$ defined by
$$
A_\omega=\Big\{ \xi\in \mathbb{R}^3\, :\, \omega^{\rm prev}<\frac{(\xi_1,\xi_2)}{|(\xi_1,\xi_2)|}\le\omega\,\Big\},
$$
we form the associated Fourier multiplier operators $S_\omega$ defined by
$$
S_\omega f = \Big(\chi_{A_\omega} \widehat{f}\,\Big)^\vee.
$$
In the following lemma, we control the square function associated to these Fourier multipliers by two--dimensional directional maximal operators, for which we will need to recall some definitions.

The maximal operator $M_\omega$ is
 defined, initially on Schwartz functions,~by
$$
M_\omega f(x) =\sup_{r>0} \frac{1}{2r}\int_{-r}^r |f(x-t\omega)|\,dt.
$$
This is bounded from $L^p(\mathbb{R}^3)$ to $L^p(\mathbb{R}^3)$, where $p>1$, by an application of Fubini's theorem and the Hardy--Littlewood maximal theorem on the copy of the real line $\mathrm{span}(\omega)$. For a set of directions $\Omega\subset \mathbb{S}^1\subset \mathbb{R}^2$ we consider the directional maximal operator $M_\Omega$ defined by
$$
M_\Omega \,:\, f\mapsto \sup_{\omega\in\Omega}M_\omega f.
$$

We consider sequences $\{\theta_{i}\}_{i\in\mathbb{Z}}$ that satisfy
$ 0<\theta_{i+1}<\la\theta_i $ with lacunary constant $0<\la<1$. We
say that a set of directions $\Omega\subset \mathbb{S}^1$ is
lacunary (of order one) if there exists a lacunary
sequence~$\{\theta_{i}\}_{i\in\mathbb{Z}}$ so that there is at most
one direction in each of the sets
$$
\Omega_i=\Big\{ \omega \in \Omega\, :\, \theta_{i+1}<
\Big|\frac{\omega_2}{\omega_1}\Big|\le \theta_i\,\Big\}.
$$
We are also obliged to consider the perpendicular directions
$$
\Omega^\perp:=\big\{ \,\omega\times e_3 \in\mathbb{S}^1\,:\, \omega\in\Omega\,\big\},
$$
and the associated maximal operator $M_{\Omega^\perp}$. Of course, by rotational symmetry, $M_{\Omega^\perp}$ is  bounded if and only if $M_\Omega$ is bounded.

\begin{lemma}\label{onethree} Let $\Omega\subset \mathbb{S}^1$ be lacunary. Then
$$
\int_{\mathbb{R}^3} \sum_{\omega\in\Omega}|S_\omega f|^2 w\ \le \  C\! \int_{\mathbb{R}^3}  |f|^2 M^3_{e_2}M^3_{e_1}M^3_{\Omega^\perp}M^3_{e_2}M^3_{e_1} w,
$$
where $C$ depends only on the lacunary constant $\la$.
Moreover,
$$
\int_{\mathbb{R}^3} |f|^2 w\ \le \  C\!\int_{\mathbb{R}^3}   \sum_{\omega\in\Omega}|S_\omega f|^2 M^3_{e_2}M^3_{e_1}M^3_{\Omega^\perp}M^3_{e_2}M^3_{e_1} w.
$$
\end{lemma}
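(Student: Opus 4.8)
The plan is to prove the first inequality directly, by reducing---via symmetry and a ``peeling'' argument---to a weighted Littlewood--Paley estimate for a lacunary family of directional half--space projections, and then to deduce the second inequality from the same estimate by exploiting that the multipliers involved square to the identity (an angular analogue of $H^2=-I$ for the Hilbert transform $H$), averaged over signs. For the reduction: since $\Omega$ is lacunary, after discarding finitely many directions (whose sectors contribute only finitely many terms, controlled directly by Lemma~\ref{si}) it is a union of $O(1)$ lacunary subsequences, each accumulating at one of $\pm e_1,\pm e_2$; by the rotational symmetry of the statement (a quarter turn in the $(\xi_1,\xi_2)$ plane swaps $e_1$ and $e_2$ and sends $\Omega^\perp$ to a set of directions) we may assume $\Omega=\{\omega_j\}$ accumulates at $e_1$, so that each $A_{\omega_j}$ is a thin wedge about a ray close to the $\xi_1$--axis. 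Grouping by the quadrant $Q$ of the $(\xi_1,\xi_2)$--plane containing it, and writing $\Pi=P^{\pm}_{e_1}P^{\pm}_{e_2}$ for the Fourier projection onto $Q$---a composition of two directional half--space projections, i.e.\ of one--dimensional Hilbert transforms in the directions $e_1$ and $e_2$---we have $S_\omega f=R_\omega(\Pi f)$, where $R_\omega$ is the Fourier projection onto the double wedge bounded by the lines $\mathrm{span}(\omega)$ and $\mathrm{span}(\omega^{\rm prev})$. Peeling off $\Pi$ with Lemma~\ref{si}, applied on lines parallel first to $e_1$ and then to $e_2$, costs exactly the two outer factors $M^3_{e_2}M^3_{e_1}$ and reduces both inequalities to their analogues for $g$ with $\widehat g$ supported in a single quadrant; in particular the first inequality reduces to the core estimate
$$
\int_{\mathbb{R}^3}\sum_{\omega}|R_\omega g|^2\, w\ \lc\ \int_{\mathbb{R}^3}|g|^2\, M^3_{\Omega^\perp}M^3_{e_2}M^3_{e_1}w .
$$

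For the core estimate, the two boundary lines of $R_\omega$ are $\mathrm{span}(\omega)$ and $\mathrm{span}(\omega^{\rm prev})$, so $R_\omega=Q_\omega-Q_{\omega^{\rm prev}}$, where $Q_\omega=\tfrac12(I+iH_{\nu_\omega})$ is the Fourier projection onto the half--space with inner normal $\nu_\omega:=\omega\times e_3$ and $H_{\nu_\omega}$ is the one--dimensional Hilbert transform along lines in direction $\nu_\omega$; note $\{\nu_\omega\}=\Omega^\perp$ is lacunary, and these directions accumulate at $\pm e_2$. By orthonormality of the Rademacher functions $\epsilon_\omega(t)$ one has $\sum_\omega|R_\omega g|^2=\int_0^1|\sum_\omega\epsilon_\omega(t)R_\omega g|^2\,dt$, and a summation by parts turns $\sum_\omega\epsilon_\omega(t)R_\omega$ (truncated to the first $N$ terms) into $\tfrac i2\sum_\omega\delta_\omega(t)H_{\nu_\omega}$ with coefficients bounded by $2$. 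Thus it suffices to bound, uniformly in the (finitely many, bounded) coefficients, the weighted $L^2$ norm of the lacunary sum of directional Hilbert transforms $\sum_\omega\delta_\omega H_{\nu_\omega}$, and then to let $N\to\infty$ by monotone convergence. This operator has a Calder\'on--Zygmund kernel in the $(x_1,x_2)$ variables and is constant in $x_3$; after the usual reduction to smooth Littlewood--Paley cut--offs, its $\ell^2$--valued kernel is a standard kernel along lines parallel to $e_1$ and to $e_2$, so successive applications of Lemma~\ref{si} on such lines remove those variables, and a final application along the lacunary directions $\nu_\omega\in\Omega^\perp$ (here one uses that $M_{\Omega^\perp}$, equivalently $M_\Omega$, is bounded on $L^p(\mathbb{R}^3)$, which makes the remaining vector--valued sum convergent) yields the stated bound; integrating in $t$ gives the core estimate, and composing with the peeling step gives the first inequality.

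For the second inequality, by Cauchy--Schwarz over the $O(1)$ quadrants it again suffices to treat $g$ with $\widehat g$ supported in one quadrant $Q$ and to prove $\int|g|^2 w\lc\int\sum_\omega|R_\omega g|^2\,M^3_{\Omega^\perp}M^3_{e_2}M^3_{e_1}w$ (which, since $M^3_{e_2}M^3_{e_1}(\cdot)\ge(\cdot)$, is stronger than what is needed, and $R_\omega g=S_\omega g=S_\omega f$ for the $\omega$ whose sector lies in $Q$). For fixed $t$ and level $N$, let $m^{(N)}_t$ agree with $\epsilon_{\omega_j}(t)$ on $R_{\omega_j}$ for $j\le N$ and equal $1$ on the single wedge $\bigcup_{j>N}R_{\omega_j}$; then $m^{(N)}_t$ is unimodular on $\widehat g$, so $m^{(N)}_t(D)^2=I$ on the range of $\Pi$ and $g=m^{(N)}_t(D)\bigl(m^{(N)}_t(D)g\bigr)$. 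Since summation by parts exhibits $m^{(N)}_t(D)$ as a bounded multiple of the identity plus a finite lacunary sum of directional Hilbert transforms, the bound proved above applies to it, giving $\int|g|^2 w\lc\int|m^{(N)}_t(D)g|^2\,M^3_{\Omega^\perp}M^3_{e_2}M^3_{e_1}w$ uniformly in $t$ and $N$. Integrating in $t$ (using orthonormality of the $\epsilon_\omega$ once more) turns the right side into $\int\bigl(\sum_{j\le N}|R_{\omega_j}g|^2+|P_N g|^2\bigr)M^3_{\Omega^\perp}M^3_{e_2}M^3_{e_1}w$, where $P_N$ is the projection onto the tail wedge; letting $N\to\infty$---legitimate after the standard reduction to bounded compactly supported weights, for which $M^3_{\Omega^\perp}M^3_{e_2}M^3_{e_1}w$ is bounded and the tail term tends to $0$---yields the claim.

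The substance is the core estimate, namely the weighted $L^2$ bound, uniform in the signs, for a lacunary sum of directional Hilbert transforms $\sum_\omega\delta_\omega H_{\nu_\omega}$ in terms of the iterated directional maximal operators. The difficulty is that this operator is genuinely two--dimensional---the singular directions $\nu_\omega$ vary and only the lacunarity controls the interaction of the infinitely many pieces---so one must identify the correct Calder\'on--Zygmund structure (which is what forces the smoothing of the cut--offs and produces the extra $M^3_{e_2}M^3_{e_1}$ beyond the $M^3_{\Omega^\perp}$ coming from the directional Hilbert transforms) and carry out the one--dimensional applications of Lemma~\ref{si} in the correct order, along lines parallel to $e_1$, then to $e_2$, then along the directions of $\Omega^\perp$, so as to land on exactly $M^3_{\Omega^\perp}M^3_{e_2}M^3_{e_1}$.
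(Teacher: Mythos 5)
Your overall architecture (randomise the sectors with Rademacher signs, prove a weighted bound for the sign-randomised multiplier uniformly in the signs, and recover the reverse inequality by composing the randomised operator with itself) is in the same spirit as the paper, and your treatment of the second inequality via $m^{(N)}_t(D)^2=I$ on the quadrant would be acceptable once the forward estimate is in place. The problem is that the forward estimate --- what you yourself call the core estimate, the uniform weighted $L^2$ bound for $\sum_\omega\epsilon_\omega R_\omega$, equivalently for $\sum_\omega\delta_\omega\mathcal{H}_{\nu_\omega}$ with bounded coefficients over the lacunary directions $\nu_\omega\in\Omega^\perp$ --- is asserted rather than proved, and the sketch you give for it does not work as stated. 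Lemma~\ref{si} is a scalar, one--dimensional result for convolution kernels $\mathcal{K}(x-y)$ on a line; the operator $\sum_\omega\delta_\omega\mathcal{H}_{\nu_\omega}$ is genuinely two--dimensional, its kernel is a sum of singular measures supported on the lacunary family of lines $\mathrm{span}(\nu_\omega)$, and its restriction to lines parallel to $e_1$ or $e_2$ is not a one--dimensional Calder\'on--Zygmund convolution to which Lemma~\ref{si} could be applied, nor does the lemma cover the ``$\ell^2$--valued standard kernel'' you invoke. Likewise, a ``final application along the directions $\nu_\omega$'' only makes sense after the operator has been decomposed so that each piece involves a \emph{single} direction; and the appeal to the $L^p$--boundedness of $M_{\Omega^\perp}$ ``to make the vector--valued sum convergent'' has no role in a weighted $L^2(w)$ inequality for an arbitrary weight --- that boundedness is only used later, when deducing Theorem~\ref{thetheorem} from the weighted estimate.

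The missing mechanism is exactly what the paper supplies: first a weighted Littlewood--Paley inequality (both directions) for Fourier projections $P_{ij}$ onto dyadic rectangles whose side ratios are calibrated to the lacunary constant (the $\la^{1/8}$ in \eqref{part}), proved by two applications of Lemma~\ref{si} with Rademacher randomisation in the $e_1$ and $e_2$ variables; and then the geometric observation that each such rectangle meets the boundary lines of at most two sectors, so that $S_\omega P_{ij}$ is either $0$, $P_{ij}$, or a single half--space projection $H_{\pm\omega\times e_3}P_{ij}$ (or with $\omega^{\rm prev}$), at which point Lemma~\ref{si} applies in that one direction of $\Omega^\perp$. Your proposal never states this rectangle decomposition or the at-most-two-directions-per-rectangle fact, yet it is precisely what turns the lacunary sum into pieces amenable to the one--dimensional lemma. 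There is also an accounting problem: carried out this way, the decomposition, the directional Hilbert transform step, and the recomposition cost $M^3_{e_2}M^3_{e_1}$, $M^3_{\Omega^\perp}$, and $M^3_{e_2}M^3_{e_1}$ respectively (five factors, as in the statement), whereas your core estimate allots only three factors after you have already spent the outer $M^3_{e_2}M^3_{e_1}$ on peeling the quadrant projection $\Pi$ --- a step the paper does not need at all, since the sector projections are handled directly. So as written the central step is a genuine gap, and even its intended repair would have to be reorganised to make the weights come out right.
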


\begin{proof} Let $\phi=\phi_{\rm o}\ast \phi_{\rm o}$ be a Schwartz function, supported on $[-\la^{-1/8}, -\la^{{1/8}}]\cup[\la^{{1/8}},\la^{-1/8}],$ such that
\begin{equation}\label{part}
\sum_{i\in \mathbb{Z}} \phi^2(\la^{i/8}\xi)=1,\quad \xi\in\mathbb{R}\backslash\{0\},
\end{equation}
where $\la$ is the constant of lacunarity of the directions $\Omega$.
Consider the Fourier projection operators $P_{ij}$ onto dyadic rectangles defined by
$$\widehat{P_{ij} f}(\xi)=\phi(\la^{i/8}\xi_1)\phi(\la^{j/8}\xi_2)\widehat{f}(\xi).$$
First we will prove that there is a constant $C$, depending only on $\la$, such that
\begin{align}\label{fr}
\int_{\mathbb{R}^3} \sum_{i,j\in\mathbb{Z}} |P_{ij}f|^2 w\ \le \  C\! \int_{\mathbb{R}^3}  |f|^2 M^3_{e_2}M^3_{e_1} w\\
\int_{\mathbb{R}^3}|f|^2 w\ \le \  C\! \int_{\mathbb{R}^3}   \sum_{i,j\in\mathbb{Z}} |P_{ij}f|^2M^3_{e_2}M^3_{e_1} w.\nonumber
\end{align}
This will follow by factorising into compositions of
one--dimensional operators $P_{ij}=P^2_jP^1_i$, in the obvious way,
and applying Lemma~\ref{si}. These inequalities are well-known; see for example \cite{B2}. We prove them directly using Lemma~\ref{si} for the convenience of the reader.

Indeed, for all $t\in[0,1]$, we first apply  Lemma~\ref{si}  to the
operators $$f\mapsto\sum_{i\in\mathbb{Z}}r_i(t)P_{i}^1f,$$ where
$r_i(t)=r_0(2^i t)$ and $r_0$ is the Rademacher function, equal to
one on $[k,k+1/2)$ and minus one on $[k-1/2,k)$ for all $k\in
\mathbb{Z}$. To see that the decay conditions of the lemma are
satisfied, uniformly in $t\in [0,1]$, we note that the associated
kernel $\mathcal{K}$ satisfies
$$
\mathcal{K}(x)=\sum_{i\in\mathbb{Z}}
r_i(t)\la^{-i/8}\phi^\vee(\la^{-i/8}x)\ \approx \sum_{i\le 8\log_{\la^{-1}}
|x|^{-1}} r_i(t)\la^{-i/8}\phi^\vee(\la^{-i/8}x),
$$
which follows by using the rapid decay of the Schwartz function $\phi^\vee$.
We obtain that
$$
\int_{\mathbb{R}^3}  \Big|\sum_{i\in\mathbb{Z}}r_i(t)P_{i}^1g\Big|^2 w \ \le \  C\!\int_{\mathbb{R}^3}  |g|^2 M^3_{e_1}w,
$$
where the constant is independent of $t\in[0,1]$.
By taking $g=\sum_{j} r_j(s) P^2_{j}f$, in particular this implies that
$$
\int_{\mathbb{R}^3}  \Big|\sum_{i,j\in\mathbb{Z}}r_i(t)r_j(s)P_{ij}f\Big|^2 w\ \le \  C\!\int_{\mathbb{R}^3}  \Big|\sum_{j\in\mathbb{Z}}r_j(s)P^2_{j}f\Big|^2 M^3_{e_1}w,
$$
where the constant $C$ is independent of both $t\in[0,1]$ and $s\in[0,1]$.
Applying Lemma~\ref{si} again, this time in the $x_2$ variable, yields
\begin{align}\label{pold}
\int_{\mathbb{R}^3}  \Big|\sum_{i,j\in\mathbb{Z}}r_i(t)r_j(s)P_{ij}f\Big|^2 w \ \le \  C\! \int_{\mathbb{R}^3}  |f|^2 M^3_{e_2}M^3_{e_1}w.
\end{align}
Integrating over $[0,1]\times[0,1]$ with respect to $(t,s)$ yields the first inequality of \eqref{fr} by the orthogonality of the Rademacher functions.

To see the reverse inequality, we take $f=\sum_{k,\ell}r_k(t)r_\ell(s)P_{k\ell}g$ and note that
\begin{align*}
&\sum_{i,j\in\mathbb{Z}}r_i(t)r_j(s)P_{ij}\Big[\sum_{k,\ell\in\mathbb{Z}}r_k(t)r_\ell(s)P_{k\ell}g\Big]\\
=\ &\sum_{i,j\in\mathbb{Z}}r^2_i(t)r^2_j(s) P_{ij}P_{ij} g\ +\sum_{i,j,k,\ell\in\mathbb{Z} : i\neq k,j\neq \ell}r_i(t)r_j(s)r_{k}(t)r_{\ell}(s) P_{ij}P_{k\ell} g\\
=\ & \ g\ +\sum_{i,j,k,\ell\in\mathbb{Z} : i\neq k,j\neq \ell}r_i(t)r_j(s)r_{k}(t)r_{\ell}(s) P_{ij}P_{k\ell} g,
\end{align*}
using \eqref{part} and that $r^2_ir^2_j\equiv1$.  Substituting into
\eqref{pold} and integrating over $[0,1]\times[0,1]$, we obtain 
$$
\int_{\mathbb{R}^3}\Big(|g|^2+\sum_{i,j\in\mathbb{Z}: i\neq j}|P_{ij}P_{ji}g|^2 \Big)w\ \le \  C\! \int_{\mathbb{R}^3}   \sum_{k,\ell\in\mathbb{Z}} |P_{k\ell}g|^2M^3_{e_2}M^3_{e_1}w.
$$
by the orthogonality of the Rademacher functions as before. Note that unless $j=i\pm 1$, the sumands in the first sum are identically zero, but in any case they are positive and so this implies 
the second inequality of \eqref{fr}, for positive weights $w$. Then the inequality for positive weights implies the inequality for general weights.

Now our desired angular inequalities will follow by combining these more traditional Littlewood--Paley inequalities with a further application of Lemma~\ref{si} to the operator $H_\omega$ defined by
$$
\widehat{H_{\omega} f}(\xi) = \chi_{\omega\cdot \xi \le  0} \widehat{f}(\xi).
$$
Note that $H_{\omega}=\frac{1}{2}({\rm Id}+\frac{1}{i} \mathcal{H}_\omega)$, where $\mathcal{H}_{\omega}$ denotes the directional Hilbert transform defined by
$$
\mathcal{H}_{\omega} f(x)= \frac{1}{\pi}\, p.v.\!\int f(x-\omega t)\, \frac{dt}{t}.
$$
In the copy of the real line  $\mathrm{span}(\omega)$, this  clearly satisfies the conditions of the lemma.

\begin{figure}\label{fthree}
\centering
\[\begin{picture}(128,128)
\put(0,0){\line(0,1){128}} \put(0,0){\line(1,0){128}}
\put(8,0){\line(0,1){128}} \put(0,8){\line(1,0){128}}
\put(16,0){\line(0,1){128}} \put(0,16){\line(1,0){128}}
\put(32,0){\line(0,1){128}} \put(0,32){\line(1,0){128}}
\put(64,0){\line(0,1){128}} \put(0,64){\line(1,0){128}}
\put(128,0){\line(0,1){128}} \put(0,128){\line(1,0){128}}
\put(90.51,0){\line(0,1){128}} \put(0,90.51){\line(1,0){128}}
\put(45.255,0){\line(0,1){128}} \put(0,45.255){\line(1,0){128}}
\put(22.63,0){\line(0,1){128}} \put(0,22.63){\line(1,0){128}}
\put(11.3,0){\line(0,1){128}} \put(0,11.3){\line(1,0){128}}
\put(0,0){\line(5,6){106.5}}
\put(0,0){\line(1,3){42.6666}}
\put(0,0){\line(2,1){128}}
\put(0,0){\line(1,6){21.33333}}
\put(0,0){\line(5,1){128}}
\end{picture}\]
\caption{Segments with lacunary constant $1/2$ intersecting
rectangles with lacunary constant $1/\sqrt{2}$.}
 \end{figure}
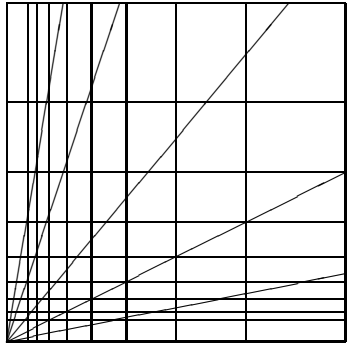

By enumerating the directions $\omega$ of $\Omega$ and writing $r_\omega$ for the associated Rademacher function we apply the inequalities of \eqref{fr};
\begin{align}\label{ror}
\int_{\mathbb{R}^3}  \Big|\sum_{\omega\in\Omega} r_\omega(t)S_\omega f\Big|^2 w
&\le \ C\!\int_{\mathbb{R}^3} \sum_{i,j\in\mathbb{Z}} \Big|\sum_{\omega: P_{ij}S_\omega\neq 0}r_\omega(t)S_\omega P_{ij}f\Big|^2 M^3_{e_2}M^3_{e_1} w\nonumber\\
&\le \ C\!\sum_{i,j\in\mathbb{Z}} \int_{\mathbb{R}^3} |S_\omega P_{ij}f|^2 M^3_{e_2}M^3_{e_1} w\\
&\le\ C\!\int_{\mathbb{R}^3}  \sum_{i,j\in\mathbb{Z}}|P_{ij}f|^2 M^3_{\Omega^\perp}M^3_{e_2}M^3_{e_1}w\nonumber\\
&\le \ C\!\int_{\mathbb{R}^3} |f|^2 M^3_{e_2}M^3_{e_1}M^3_{\Omega^\perp}M^3_{e_2}M^3_{e_1}w,\nonumber
\end{align}
uniformly in $t\in[0,1]$.
Here, the second inequality is due to the fact that there are at
most two directions $\omega$ which satisfy $P_{ij}S_\omega\neq 0$
for a given $i$ and~$j$ (see Figure~\ref{fthree}). The third inequality is by
Lemma~\ref{si} in the directions $\pm\omega\times e_3$ or
$\pm\omega^{\rm prev}\times e_3$ as $S_{\omega}P_{ij}$ is equal to
either $H_{\pm\omega\times e_3}P_{ij}$ or $H_{\pm\omega^{\rm
prev}\times e_3}P_{ij}$ (if it is not simply $P_{ij}$). Integrating
over $[0,1]$ with respect to $t$ yields the first of our desired
inequalities.

To see the reverse inequality, we take $f=\sum_{\omega'}r_{\omega'}(t)S_{\omega'} g$ as before, so that
$$
\sum_{\omega\in\Omega} r_\omega(t)S_\omega\sum_{\omega'\in\Omega}r_{\omega'}(t)S_{\omega'} g=\sum_{\omega\in\Omega} r^2_\omega(t) S_\omega g=g,
$$
as $r^2_\omega\equiv 1$.
Substituting into \eqref{ror} we see that
$$
\int_{\mathbb{R}^3}  |g|^2 w\ \le \  C\!\int_{\mathbb{R}^3} \Big|\sum_{\omega'\in\Omega}r_{\omega'}(t)S_{\omega'} g\Big|^2 M^3_{e_2}M^3_{e_1}M^3_{\Omega^\perp}M^3_{e_2}M^3_{e_1}w,
$$
and so by integrating over $[0,1]$ with respect to $t$, we obtain
the second inequality.
\end{proof}

\section{Proof of Theorem~\ref{thetheorem}}\label{four}

First of all,  we give a precise version of the definition of finite order lacunary sets as mentioned in the introduction.

\begin{defn}\label{dfour} If $\Omega$ consists of a
single direction we say that it is {\it lacunary of order zero}. We then say that $\Omega\subset \mathbb{S}^1\subset\mathbb{R}^2$ is {\it lacunary of order $K$} if there is a choice of basis $\{e_1,e_2\}$ and lacunary sequence $\{\theta_i\}_{i\in \mathbb{Z}}$ such that the sets
$$
\Omega_i=\Big\{ \omega \in \Omega\, :\, \theta_{i+1}<
\Big|\frac{\omega_2}{\omega_1}\Big|\le \theta_i\,\Big\}.
$$
are lacunary of order $\le K-1$ for all $i\in \mathbb{Z}$, with
uniformly bounded lacunary constants.
\end{defn}

 Sj\"ogren and
Sj\"olin \cite{SS} proved that $M_{\Omega}$ is bounded from
$L^p(\mathbb{R}^2)$ to $L^p(\mathbb{R}^2)$, where $p>1$, if $\Omega$ is lacunary of
finite order (see also \cite{CRF2,NSW} for precedents and \cite{B}
for an equivalence).

\begin{lemma}\label{pwq} If $\Omega\subset \mathbb{S}^1\subset \mathbb{R}^2$ is lacunary of finite order and
$$
\Omega^{\rm 3d}:=\Big\{ \,\tfrac{1}{\sqrt{2}}(\omega\pm e_3)\, :\, \omega\in\Omega\,\Big\},
$$
then $M_{\Omega^{\rm 3d}}$ is bounded from $L^p(\mathbb{R}^3)$ to $L^p(\mathbb{R}^3)$ with $p>1$.
\end{lemma}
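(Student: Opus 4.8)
The plan is to reduce Lemma~\ref{pwq} to the two--dimensional maximal theorem of Sj\"ogren and Sj\"olin by peeling off the third variable. After normalising the segments of integration, $M_{\frac1{\sqrt2}(\omega\pm e_3)}$ is comparable to the maximal average $\widetilde M_\omega^{\pm}f(x)=\sup_{r>0}\frac1{2r}\int_{-r}^{r}|f(x-t(\omega,\pm1))|\,dt$, and the reflection $x_3\mapsto -x_3$ interchanges the two signs, so it is enough to bound $\sup_{\omega\in\Omega}\widetilde M_\omega$ with $\widetilde M_\omega=\widetilde M_\omega^{+}$. Since the class of finite order lacunary sets is stable under rotations of $\mathbb{R}^2$, under passing to subsets, and under the dyadic splitting $|\omega_2/\omega_1|\sim 2^{-m}$ (the sets $\Omega_i$ of Definition~\ref{dfour} being lacunary of lower order, with uniform constants), we may assume that $\Omega$ lies in a small cap around $e_1$; the finitely many remaining directions with $|\omega_2/\omega_1|\gtrsim1$ contribute a bounded operator trivially. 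On such a cap $\omega_1-1=O(\omega_2^{2})$, so the measure--preserving linear shear $(x_1,x_2,x_3)\mapsto(x_1-x_3,x_2,x_3)$ --- an isometry of every $L^p(\mathbb{R}^3)$ --- carries the directions $(\omega,1)$ to $(\omega_1-1,\omega_2,1)$, which lie within angle $O(\omega_2^{2})$ of the $(x_2,x_3)$--plane. The problem thus becomes to bound $\sup_{\omega}\widetilde M_\omega$ for directions that are planar up to a quadratic error in the $x_1$--direction; for a single direction the operator is trivially bounded, so this error is the only obstruction to an immediate Fubini reduction.

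To absorb the error I would introduce a one--dimensional Littlewood--Paley decomposition $f=\sum_{k}\Delta_k f$ in the $\xi_1$ frequency, so that $\Delta_k f$ is band limited at scale $2^{k}$ in $x_1$. Along a segment of length $R$ in one of the transformed directions the $x_1$--coordinate changes only by $O(\omega_2^{2}R)$; in the range $\omega_2^{2}R\lesssim 2^{-k}$ this is below the smoothness scale of $\Delta_k f$, and the local mean value property of band limited functions lets one replace the value of $\Delta_k f$ along the segment by its average over an $x_1$--interval of length $\lesssim 2^{-k}$, up to rapidly decaying tails. The tilted average is then dominated --- uniformly in $k$ --- by the Hardy--Littlewood maximal operator in $x_1$ composed with the two--dimensional directional maximal operator $M_{\widetilde\Omega}$ acting slicewise in $(x_2,x_3)$, where $\widetilde\Omega=\{(\omega_2,1)/|(\omega_2,1)|:\omega\in\Omega\}$ is again lacunary of finite order; this last operator is bounded on $L^p$ of each $(x_2,x_3)$--slice by the Sj\"ogren--Sj\"olin theorem, hence on $L^p(\mathbb{R}^3)$ by Fubini. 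In the complementary range $\omega_2^{2}R\gtrsim 2^{-k}$ the segment runs through many periods of oscillation of $\Delta_k f$ in $x_1$, and one recovers a power gain $2^{-\varepsilon|k|}$ from a Plancherel ($L^2$) estimate on the Fourier transform of the normalised arclength measure, which after interpolation with a crude $L^p$ bound survives for $p$ near $2$.

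The pieces are then reassembled by a square function argument: since $\sum_k\Delta_k=\mathrm{Id}$ and the $\Delta_k$ form a Littlewood--Paley family, the bounds above combine --- using the power gain to sum the ``long'' range and vector--valued Littlewood--Paley inequalities to recollect the ``short'' range without loss --- to give boundedness of $\sup_\omega\widetilde M_\omega$ on $L^p(\mathbb{R}^3)$ for $p$ near $2$, and then for all $1<p<\infty$ by interpolation with trivial endpoints (or by repeating the argument). The main obstacle throughout is the genuine three--dimensionality: one cannot integrate out the extra variable directly, and a naive domination of $\widetilde M_\omega$ by a product of lower--dimensional maximal operators is false, since point values are not controlled by averages. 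The Littlewood--Paley decomposition in the transverse frequency is precisely the device that converts the residual tilt into a harmless local averaging, and the delicate points are to carry this out uniformly across the scales $2^{-k}$ and the lacunary rings $|\omega_2/\omega_1|\sim 2^{-m}$, and to reassemble without loss.
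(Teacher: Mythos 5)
Your opening reduction --- ``we may assume that $\Omega$ lies in a small cap around $e_1$'' --- is where the real difficulty of the lemma sits, and it is not justified. Splitting $\Omega$ into the rings $\Omega_i$ of Definition~\ref{dfour} does give lower-order lacunary subsets, but $M_{\Omega^{\rm 3d}}$ is a supremum over all directions simultaneously, and $\|M_{\Omega}\|_{p\to p}$ is not controlled by $\sup_i\|M_{\Omega_i}\|_{p\to p}$ by ``passing to subsets'': recombining the pieces requires an almost-orthogonality/localisation principle, which is exactly the content of Alfonseca's theorem \cite{A} in the plane and of the Parcet--Rogers principle \eqref{locp} that the paper invokes and iterates (choosing a new basis at each step, centred at an accumulation point of highest order, so that the lacunarity of $\Omega^{\rm 3d}$ remains visible, until only isolated directions are left). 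Note also that the directions with $|\omega_2/\omega_1|\gtrsim 1$ are not ``finitely many'': each ring $\Omega_i$ may itself contain an infinite lacunary set of order $K-1$, and $\Omega$ may accumulate at $e_2$ as well as at $e_1$. So the cap reduction already presupposes something at least as strong as the statement you are trying to prove.

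Even granting the cap reduction, the analytic core of the sketch has gaps. In the ``long'' regime $\omega_2^2R\gtrsim 2^{-k}$ there is no gain $2^{-\varepsilon|k|}$ from Plancherel alone: the Fourier transform of the normalised arclength measure on a segment of length $R$ in direction $v=(O(\omega_2^2),\omega_2,1)$ decays like $(R|\xi\cdot v|)^{-1}$, and the region where $|\xi_1|\sim 2^k$ but $\xi\cdot v\approx 0$ is large, so band-limitation in $\xi_1$ by itself yields no decay; extracting a gain would require a further decomposition in $(\xi_2,\xi_3)$ exploiting the lacunary structure, i.e.\ redoing Nagel--Stein--Wainger/Sj\"ogren--Sj\"olin-type almost-orthogonality in three dimensions rather than quoting it. The reassembly is also problematic: since the maximal operator is sublinear, $\sup_\omega \widetilde M_\omega f\le\sum_k\sup_\omega \widetilde M_\omega\Delta_k f$ loses the sum in $k$, and converting this into a square-function bound inside a supremum needs vector-valued or bootstrap machinery that is only alluded to. Finally, ``interpolation with trivial endpoints'' cannot produce $1<p<2$: the only trivial endpoint is $L^\infty$, which together with an $L^2$ bound gives only $p\ge 2$, and the range $p<2$ is precisely the hard part for directional maximal operators. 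The paper's proof of Lemma~\ref{pwq} is genuinely different and avoids estimating the maximal function directly: it applies \eqref{locp} finitely many times with carefully rotated bases, reducing the lacunary order at each stage until the directions are isolated.
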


\begin{proof} We employ the localisation principle due to Parcet and the second author~\cite{PR}.
For directions $\Omega\subset \mathbb{S}^2\subset \mathbb{R}^3$ and a lacunary sequence $\{\theta_i\}_{i\in\mathbb{Z}}$ we define segments $\Omega_{i}^\ell$ by
$$
\Omega_{i}^\ell=\Big\{ \omega \in \Omega\, :\, \theta_{i+1}<
\Big|\frac{\omega_k}{\omega_j}\Big|\le \theta_i\,\Big\},\qquad j<k,
$$
where $e_\ell=e_j\times e_k$ or $-e_j\times e_k$. Then
\begin{equation}\label{locp}
\|M_\Omega\|_{p\to p}\ \le \ C \sup_{\ell=1,2,3}\sup_{i\in\mathbb{Z}}\|M_{\Omega_{i}^\ell}\|_{p\to p},
\end{equation}
where $C$ depends only on $p>1$ and the lacunary constant (see also \cite{A} for a two-dimensional version). With this we can separate
the directions of $\Omega^{\rm 3d}$ into isolated directions. The
main difficulty is to choose the basis correctly between each
application so that the lacunarity of $\Omega^{\rm 3d}$ is \lq
visible' in each direction.

Initially we change the basis vector $e_3$ by $e_3\to
\tfrac{1}{\sqrt{2}}(\omega_{0}\pm e_3)$, where
$\omega_{0}\in\Omega\subset \mathbb{S}^1$ is one of the accumulation
points  of highest order, and take $e_1$ so that $e_3+e_1$ is
tangent to the circle which contains~$\Omega^{\rm 3d}$.
 This also fixes the final basis vector $e_2$, chosen so that $e_3=e_1\times e_2$.
 Now, if~$\Omega$ is lacunary of order~$K$ with lacunary constant $\la$,
 we can take $\theta_i=\la^{i/2}$ for all $i\in\mathbb{Z}$. Then the directions of~$\Omega$ which
 correspond to the
 directions of the resulting segments
  of the localisation principle are lacunary of order $\le K-1$ (at
 least after eight applications of~\eqref{locp}), with lacunary constants uniformly bounded by $\la$. Thus,
  after a finite number of applications of~\eqref{locp}, each time changing the basis as before to include one of the accumulation points of highest order, we separate the directions into isolated directions, and so  $M_{\Omega^{\rm 3d}}$ is bounded.
\end{proof}

Now as $T_{\Omega}$ is self-adjoint, the boundedness in $L^p$ follows from the boundedness in $L^{p'}$, where $\frac{1}{p}+\frac{1}{p'}=1$. To see that $T_{\Omega}$ is bounded in $L^p$ with $2< p<\infty$, by H\"older's inequality and the following weighted inequality it suffices to bound the maximal operators from~$L^{(p/2)'}$ to $L^{(p/2)'}$. Thus, Theorem~\ref{thetheorem} is obtained by combining the following theorem with the previous lemma and the boundedness of $M_{\Omega\cup\Omega^\perp}$ due to Sj\"ogren--Sj\"olin \cite{SS}.

\begin{theorem}\label{thetheorem2} Let $\Omega\subset \mathbb{S}^1$ be lacunary of order~$K$. Then
$$
\int_{\mathbb{R}^3} |T_{\Omega}f|^2 w\ \le \  C\! \int_{\mathbb{R}^3}  |f|^2 M^{15K}_{\Omega\cup\Omega^\perp}M^6_{\Omega^{\rm 3d}}M^{15K}_{\Omega\cup\Omega^\perp}w,
$$
where $C$ depends only on the lacunary constant $\la$ and the lacunary order $K$.
\end{theorem}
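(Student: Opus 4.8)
The plan is to peel the angular pieces off $T_{\Omega}$, to recognise each of them as an angular projection composed with at most two conic half--space operators whose normals lie in $\Omega^{\rm 3d}$, and then to sandwich these between two applications of a weighted angular Littlewood--Paley inequality of the type of Lemma~\ref{onethree}, adapted to lacunary sets of order~$K$. Concretely, since the sectors $A_\omega$, $\omega\in\Omega$, partition $\{\xi:(\xi_1,\xi_2)\neq0\}$ up to a null set, we have $T_{\Omega}=\sum_{\omega\in\Omega}U_\omega$ with $U_\omega:=S_\omega T_{\Omega}$; the $U_\omega f$ have Fourier support in the pairwise disjoint sets $A_\omega$, so $S_\omega T_{\Omega}f=U_\omega f$. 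On the sector $A_\omega$ the polygon $\mathcal{P}_\Omega$ is cut out by the single line $\omega\cdot x=1$ contained in $\omega+\omega^\perp$, so that on $A_\omega$ the cone $\Gamma_{\!\Omega}$ is the set $\{\omega\cdot(\xi_1,\xi_2)\le|\xi_3|\}$, whence $\widehat{U_\omega f}=\chi_{A_\omega}\,\chi_{\{\omega\cdot(\xi_1,\xi_2)\le|\xi_3|\}}\widehat f$. Writing $\{\omega\cdot(\xi_1,\xi_2)\le|\xi_3|\}=H_1\cup H_2$ with $H_1=\{\omega\cdot(\xi_1,\xi_2)\le\xi_3\}$ and $H_2=\{\omega\cdot(\xi_1,\xi_2)\le-\xi_3\}$, whose bounding hyperplanes have unit normals $\tfrac1{\sqrt2}(\omega-e_3),\tfrac1{\sqrt2}(\omega+e_3)\in\Omega^{\rm 3d}$, inclusion--exclusion gives
$$
U_\omega=S_\omega\bigl(H_{\nu_1}+H_{\nu_2}-H_{\nu_1}H_{\nu_2}\bigr),\qquad \nu_1,\nu_2\in\Omega^{\rm 3d},
$$
where $H_{\nu_k}$ is the half--space operator with symbol $\chi_{H_k}$; these commute, being Fourier multipliers. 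This produces the central factor $M^6_{\Omega^{\rm 3d}}$: since $H_\nu=\tfrac12({\rm Id}+\tfrac1i\mathcal{H}_\nu)$, commuting $S_\omega$ past the $H_{\nu_k}$ and applying Lemma~\ref{si} to each directional Hilbert transform $\mathcal{H}_{\nu_k}$ absorbs one factor $M^3_{\nu_k}\le M^3_{\Omega^{\rm 3d}}$ at a time, so that for every weight $V$
$$
\sum_{\omega\in\Omega}\int_{\mathbb{R}^3}|U_\omega f|^2\,V\ \le\ C\int_{\mathbb{R}^3}\sum_{\omega\in\Omega}|S_\omega f|^2\,M^6_{\Omega^{\rm 3d}}V,
$$
with $C$ absolute (there are three terms, and Lemma~\ref{si} applies to each $\mathcal{H}_\nu$ with an absolute constant).

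The next step is to prove, by induction on $K$, the order--$K$ angular Littlewood--Paley inequalities: if $\Omega$ is lacunary of order $K$ then for every weight $w$
$$
\int_{\mathbb{R}^3}\sum_{\omega\in\Omega}|S_\omega f|^2\,w\ \le\ C\!\int_{\mathbb{R}^3}|f|^2\,M^{15K}_{\Omega\cup\Omega^\perp}w,\qquad
\int_{\mathbb{R}^3}|f|^2\,w\ \le\ C\!\int_{\mathbb{R}^3}\sum_{\omega\in\Omega}|S_\omega f|^2\,M^{15K}_{\Omega\cup\Omega^\perp}w.
$$
When $K=0$ the set $\Omega$ is a single direction, $S_\omega={\rm Id}$, and both inequalities are trivial. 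For the inductive step, Definition~\ref{dfour} exhibits $\Omega$ as a lacunarily separated union of bands $\Omega_i$ of order $\le K-1$; after a rotation in the $(\xi_1,\xi_2)$--plane one may take the distinguished basis to be $\{\omega_0,\omega_0^\perp\}$ for $\omega_0\in\Omega$ an accumulation point of highest order (as in the proof of Lemma~\ref{pwq}), and the lacunary sequence $\{\theta_i\}$ so that each $\theta_i$ is the slope of a direction of $\Omega$, so that the band edges are directions of $\Omega$. The ``across bands'' decomposition $f\mapsto\bigl(\sum_{\omega\in\Omega_i}S_\omega f\bigr)_i$ is then handled exactly as in Lemma~\ref{onethree}, with the rectangle decomposition adapted to $\omega_0,\omega_0^\perp$ and the directional Hilbert transforms along perpendiculars of band edges; the resulting weight $M^3_{\omega_0^\perp}M^3_{\omega_0}M^3_{\Omega^\perp}M^3_{\omega_0^\perp}M^3_{\omega_0}w$ is dominated by $M^{15}_{\Omega\cup\Omega^\perp}w$ because $\omega_0\in\Omega$, $\omega_0^\perp\in\Omega^\perp$ and the band--edge perpendiculars lie in $\Omega^\perp$. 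Inside each band one applies the inductive hypothesis to $\Omega_i$, using $M_{\Omega_i\cup\Omega_i^\perp}\le M_{\Omega\cup\Omega^\perp}$. Each level thus costs one factor $M^{15}_{\Omega\cup\Omega^\perp}$, yielding $M^{15K}_{\Omega\cup\Omega^\perp}$ after $K$ levels.

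To assemble, I would apply the second order--$K$ inequality to $F=T_{\Omega}f$, using $S_\omega T_{\Omega}f=U_\omega f$, to get $\int|T_{\Omega}f|^2 w\le C\int\sum_\omega|U_\omega f|^2\,M^{15K}_{\Omega\cup\Omega^\perp}w$; then insert the estimate of the first paragraph with $V=M^{15K}_{\Omega\cup\Omega^\perp}w$ to get $\int|T_{\Omega}f|^2 w\le C\int\sum_\omega|S_\omega f|^2\,M^6_{\Omega^{\rm 3d}}M^{15K}_{\Omega\cup\Omega^\perp}w$; and finally apply the first order--$K$ inequality with weight $M^6_{\Omega^{\rm 3d}}M^{15K}_{\Omega\cup\Omega^\perp}w$ to collapse $\sum_\omega|S_\omega f|^2$ back to $|f|^2$, at the cost of one more factor $M^{15K}_{\Omega\cup\Omega^\perp}$. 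This is exactly the asserted inequality, and the constant depends only on $\la$ and $K$.

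The main obstacle is the induction of the second step, and within it the bookkeeping of the directional maximal operators generated at each stage --- the axis directions of the rectangle decompositions and the perpendicular directions of the directional Hilbert transforms --- together with the verification that each of them is dominated by $M_{\Omega\cup\Omega^\perp}$. This is what forces the stage--dependent choice of basis (rotating to an accumulation point of $\Omega$ of highest order) and of band edges (at genuine directions of $\Omega$), and it is the only point where the argument goes beyond a routine adaptation of the proof of Lemma~\ref{onethree}.
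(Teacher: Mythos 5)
Your overall strategy is exactly the paper's: sandwich the conic piece between a $K$-fold iteration of the weighted angular Littlewood--Paley inequality of Lemma~\ref{onethree} (costing $M^{15K}_{\Omega\cup\Omega^\perp}$ on each side, with the axis directions chosen at accumulation points so that all auxiliary maximal operators are dominated by $M_{\Omega\cup\Omega^\perp}$), and handle the conic boundary on each sector by directional Hilbert transforms in directions of $\Omega^{\rm 3d}$ via Lemma~\ref{si}, costing $M^{6}_{\Omega^{\rm 3d}}$. However, there is a genuine error in your central operator identity. You claim that on the sector $A_\omega=\{\omega^{\rm prev}<\tfrac{(\xi_1,\xi_2)}{|(\xi_1,\xi_2)|}\le\omega\}$ the polygon is cut out by the single line $\omega\cdot x=1$, so that $\widehat{S_\omega T_\Omega f}=\chi_{A_\omega}\chi_{\{\omega\cdot(\xi_1,\xi_2)\le|\xi_3|\}}\widehat f$ and hence $S_\omega T_\Omega=S_\omega(H_{\nu_1}+H_{\nu_2}-H_{\nu_1}H_{\nu_2})$ with $\nu_{1,2}=\tfrac1{\sqrt2}(\omega\mp e_3)$. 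This is false: the sector $A_\omega$ runs from $\omega^{\rm prev}$ to $\omega$, and the vertex where the sides tangent at $\omega^{\rm prev}$ and at $\omega$ meet lies in the direction $\tfrac{\omega^{\rm prev}+\omega}{|\omega^{\rm prev}+\omega|}$, strictly inside $A_\omega$. For directions in $A_\omega$ on the $\omega^{\rm prev}$-side of that vertex the binding constraint is $\omega^{\rm prev}\cdot(\xi_1,\xi_2)\le|\xi_3|$, which your operator ignores, so your right-hand side strictly contains frequencies outside $\Gamma_{\!\Omega}$ and the identity fails.

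The correct identity, which is what the paper uses, involves \emph{both} sides active on the sector:
$$
S_\omega T_{\Omega}f=H_{\frac{1}{\sqrt{2}}(\omega-e_3)} H_{\frac{1}{\sqrt{2}}(\omega^{\rm prev}-e_3)}S_\omega f+H_{\frac{1}{\sqrt{2}}(\omega+e_3)} H_{\frac{1}{\sqrt{2}}(\omega^{\rm prev}+e_3)}S_\omega f,
$$
the two terms being supported where $\pm\xi_3>0$ respectively (on the sector both $\omega\cdot(\xi_1,\xi_2)$ and $\omega^{\rm prev}\cdot(\xi_1,\xi_2)$ are positive, so each pair of half-space conditions forces the sign of $\xi_3$). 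With this replacement your argument closes exactly as you describe and coincides with the paper's proof: each term involves two half-space operators $H_\nu=\tfrac12({\rm Id}+\tfrac1i\mathcal H_\nu)$ with $\nu\in\Omega^{\rm 3d}$, so two applications of Lemma~\ref{si} still cost only $M^{6}_{\Omega^{\rm 3d}}$, and the numerology $M^{15K}_{\Omega\cup\Omega^\perp}M^{6}_{\Omega^{\rm 3d}}M^{15K}_{\Omega\cup\Omega^\perp}$ is unchanged. Your sketch of the order-$K$ iteration (choosing the basis at an accumulation point of highest order and sector/band boundaries at directions of $\overline{\Omega}$ so that every auxiliary maximal operator is dominated by $M_{\Omega\cup\Omega^\perp}$) is consistent with, and somewhat more explicit than, the paper's terse ``$K$ applications of Lemma~\ref{onethree}''.
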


\begin{proof} Noting that $M_{\Omega}=M_{\overline{\Omega}}$ we see that the maximal operators $M_{e_1}$ and $M_{e_2}$,  with $e_1=\omega_0$ and $e_2=\omega_0^\perp$, where $\omega_0$ is an accumulation point, are dominated by $M_{\Omega\cap\Omega^\perp}$. Thus, by $K$ applications of the second inequality of Lemma~\ref{onethree}, we see that
\begin{align*}
\int_{\mathbb{R}^3} |T_{\Omega}f|^2 w\ \le \  C\!\int_{\mathbb{R}^3} \sum_{\omega\in\Omega}|S_\omega T_{\Omega}f|^2 M^{15K}_{\Omega\cup\Omega^\perp} w.
\end{align*}
Now we can write
$$
S_\omega T_{\Omega}f=H_{\frac{1}{\sqrt{2}}(\omega-e_3)} H_{\frac{1}{\sqrt{2}}(\omega^{\rm prev}-e_3)}S_\omega f+H_{\frac{1}{\sqrt{2}}(\omega+e_3)} H_{\frac{1}{\sqrt{2}}(\omega^{\rm prev}+e_3)}S_\omega f,
$$ so that, by two more applications of Lemma~\ref{si} in one of these four directions, we see that
\begin{align*}
\int_{\mathbb{R}^3} |T_{\Omega}f|^2 w&\le C\!\sum_{\omega\in\Omega}\int_{\mathbb{R}^3} |S_\omega f|^2 M^6_{\Omega^{\rm 3d}}M^{15K}_{\Omega\cup\Omega^\perp}w\\
&\le C\!\int_{\mathbb{R}^3} |f|^2
M^{15K}_{\Omega\cup\Omega^\perp}M^6_{\Omega^{\rm
3d}}M^{15K}_{\Omega\cup\Omega^\perp}w,
\end{align*}
where the second inequality is by $K$ applications of the first inequality of Lemma~\ref{onethree}. This completes the proof.
\end{proof}

\begin{rem} If we consider the two--dimensional multiplier operator $R_{\Omega}$ associated to a polygon $\mathcal{P}_\Omega$, defined by
$$
R_{\Omega}\,:f\,\mapsto\Big(\chi_{\mathcal{P}_\Omega} \widehat{f}\,\Big)^\vee,
$$
where $\Omega\subset \mathbb{S}^1$ is lacunary of order $K$, the above argument also yields the weighted inequality
$$
\int_{\mathbb{R}^2} |R_{\Omega}f|^2 w\ \le \  C\! \int_{\mathbb{R}^2}  |f|^2 M^{30K+6}_{\Omega\cup\Omega^\perp}w,
$$
where the constant $C$ depends only on the lacunary constant $\la$ and the lacunary order $K$.
\end{rem}

\begin{rem} In higher dimensions, we are able to consider polytopes associated to lacunary directions with an exact product structure. That is to say that the orthogonal projections of the directions onto the two--dimensional subspaces, formed by the span of two basis vectors, are lacunary. For example, if we consider the $n$--dimensional multiplier operator
$$
R_{\Omega}\,:f\,\mapsto\Big(\chi_{\mathcal{P}_\Omega} \widehat{f}\,\Big)^\vee,
$$
where $\Omega\subset \mathbb{S}^{n-1}$ consists of normalised versions of the directions $$\{(2^{k_1},\ldots,2^{k_n})\}_{k_1,\ldots,k_n\in\mathbb{Z}},$$ the above argument also yields the weighted inequality
$$
\int_{\mathbb{R}^n} |R_{\Omega}f|^2 w\ \le \  C\! \int_{\mathbb{R}^n}  |f|^2 M^{30(n-1)+3\cdot2^{n-1}}_{\Omega}\!\!w,
$$
where the constant $C$ depends only on the dimension. For this we first use $n-1$ applications of the second inequality of Lemma~\ref{onethree}  (with $\mathbb{R}^3$ replaced by $\mathbb{R}^n$ and the lacunary sectors, divided by powers of $2$, taken in $\mathrm{span}(e_j,e_{j+1})$ with $j=1,\ldots,n-1$). Note that in this case the maximal operators $M_{e_j}$ are bounded by $M_\Omega$. This separates the multiplier in such a way that we are left to deal with the composition of $2^{n-1}$ multiplier operators of the type that can be dealt with by Lemma~\ref{si} applied in one of the directions of $\Omega$. We then recompose the operator with $n-1$ applications of the second inequality of Lemma~\ref{onethree}. This maximal operator was bounded by Carbery~\cite{Ca2}, and so the multiplier associated to these polytopes with infinite faces are bounded on  $L^p(\mathbb{R}^n)$ with $1<p<\infty$.
\end{rem}

\vspace{1em}

The authors thank Jon Bennett for bringing \cite{P} and \cite{W} to their attention, and Michael Bateman and Jon Bennett for independently pointing out a minor error.

\end{document}